\author{Malte Ha{\ss}ler}
\address{Cornell University, Department of Mathematics, Malott Hall, 212 Garden Avenue, 14853 Ithaca, USA}
\email{mh2479@cornell.edu}
\title{Lower bounds on non-random fluctuations in planar first passage percolation}
\newenvironment{maintheorem}[1]{\Theorem}{\endTheorem}
\newtheorem{lemma}{Lemma}
\newcounter{reminder}
\newcommand{\note}[1]{}
\newcommand{\hide}[1]{}
\theoremstyle{definition}
\newcommand{\eps}{\varepsilon}
\newcommand{\R}{\mathbb R}
\newcommand{\Z}{\mathbb Z}
\newcommand{\E}[2]{\mathbb{E}_{#1} #2 }
\newcommand{\Prob}[1]{\mathchoice
{\mathbb P \left( \rule{0pt}{11pt} #1 \right)}
{\mathbb P \left(#1\right)}
{\mathbb P \left(#1\right)}
{\mathbb P \left(#1\right)}
}
\begin{document}
\maketitle

\begin{abstract}
The fluctuations of the passage time in first passage percolation are of great interest. We show that the non-random fluctuations in planar FPP are at least of order $\sqrt{\log n}$ under some conditions that are known to be met for a large class of absolutely continuous edge weight distributions. This improves the ${\log(\log(n))}$ bound proven by Nakajima and is the first result showing divergence of the fluctuations for arbitrary directions. 

Our proof is an application of recent work by Dembin, Elboim and Peled on the BKS midpoint problem and the development of Mermin-Wagner type estimates.
\end{abstract}
\medskip

\section{Introduction}

First passage percolation, as initially introduced by Hammersley and Welsh \cite{hamwelsh}, is a simple model about a random metric on a graph: Consider the integer lattice $\Z^d$ and identically and independently put weights on the edges of the lattice according to some probability measure $\nu$ supported on $[0,\infty)$. The \emph{passage time} $T(p)$ of a path $p$ is defined as the sum of the weights of all edges of $p$. The passage time $T(x,y)$ between two vertices is the infimum of $T(p)$ taken over all paths connecting $x$ and $y$. Under some mild assumptions on $\nu$ (which imply that $\E{}{T(x,y)}<\infty$), subadditivity of passage times yields that $T(0,xn)/n$ converges almost surely to a deterministic time constant $\mu(x)$, which depends on $\nu$. Moreover, the rescaled, random balls $\{x \in \Z^d: \, T(0,x)\le t\}/t$ converge to a deterministic, convex, non-empty limit shape \cite{coxdurrett}.   

Major questions concern the nature of the limit shape, the behavior of geodesics and the fluctuations of the passage times. We will focus on the latter and the planar $d=2$ case. The fluctuations are usually treated in a separate random and non-random part, respectively:
\begin{equation}
\label{fluc}
T(0,x)-\mu(x)=\left( T(0,x)-\E{}{T(0,x)} \right)+ \left(\E{}{T(0,x)}-\mu(x)\right).
\end{equation}

First passage percolation is believed to reside in the so-called KPZ universality class. Thus, an ample amount of detailed conjectures exist about the model. In particular, both summands in \eqref{fluc} should be of order $|x|^{1/3}$. Yet, this is far from achievable for any $\nu$ without additional assumptions. The $\log n$ lower bound on the variance of the passage time by Newman and Piza \cite{newmanpiza} from the 90s is still the best available. Damron, Hanson, Houdré and Xu \cite{damronlb} later also showed that the mean absolute deviation $|T(0,x)-\E{}{T(0,x)}|$ is typically at least of order $\sqrt{\log n}$. For the non-random fluctuations, the best lower bound is of constant order \cite{nakajima}. In the same paper, Nakajima also gave a $\log\log(n)^{1/d}$ bound for arbitrary dimensions, however it requires a local limit shape assumption that is not verified for explicit directions. 

\subsection{Main result}

In order to apply the midpoint theorem in \cite{BKS} we need either of the following assumptions. Here $\eps_{40}>0$ and $\sigma_{40}\in (0,\eps_{40}/2)$ are absolute constants. As mentioned in \cite{BKS}, a uniform distribution with small support around $1$ is an example of a distribution satisfying \eqref{support}. 

\begin{multline}
\text{The distribution } \nu \text{ has finite exponential moments } \E{}{e^{tX}} \text{ for some } X\sim\nu, \, t>0 \\ \text{and the corresponding limit shape is not a polygon with } 40 \text{ sides or less}.
\tag{A1}
\label{expsides}
\end{multline}

\begin{equation}
\text{The distribution } \nu \text{ is supported on } [1,1+\eps_{40}] \text{ with variance at least } \sigma_{40}^2.
\tag{A2}
\label{support}
\end{equation}
\medskip
\begin{maintheorem}{Main Theorem}\label{Thm:nrfluclb}
Let $\nu$ be an absolutely continuous probability measure $\nu$ satisfying \eqref{expsides} or \eqref{support}. There exists a constant $c>0$ such that for all $x\in \Z^2\setminus \{0\}$,
\[
\E{}{T(0,x)}-\mu(x)\ge c \sqrt{\log |x|}. 
\]
\end{maintheorem}

\subsection{Idea of the proof} We start with a surprisingly simple idea by Nakajima \cite[eq.~(3.1)]{nakajima} used in showing that the non-random fluctuations are at least of constant order. Since $\E{}{T(-x,x)}\ge \mu(2x)$ for any $x\in \Z^2$ by subadditivity, 
\begin{equation}
\label{eq:goal}
2(\E{}{T(0,x)}-\mu(x))\ge \E{}{\left[ T(-x,0)+T(0,x)-T(-x,x)\right]}.
\end{equation}
Next, we rely on the methods used by Dembin, Elboim and Peled \cite{BKS}. Firstly, the geodesic from $-x$ to $x$ will likely not intersect a small region around the origin. Secondly, increasing the weights in this region will retain a plausible event that does not affect the geodesic from $-x$ to $x$.

\subsection{Outlook}

We mentioned Damron et. al. \cite{damronlb} and their lower bound on the absolute deviation of the passage time. If their result still holds in a local sense when only edges with distance at most $|x|^{1/16}$ from the origin are changed, then one can reproduce the following: If the absolute deviation is at least $f(n)$ in this local sense, then the non-random fluctuations are at least $f(n^{1/16})$. While similar to the $'\gamma \ge \chi '$-result \cite{adhgammachi}, no strong existence of the variance exponent $\chi$ is required here. By applying a recent theorem by Dembin and Elboim \cite{defluc} extending the older result in \cite{newmanpiza}, it would follow that non-random fluctuations are at least of order $|x|^{1/128}$ under a uniform curvature assumption. 

Weakening the moment and continuity assumptions is conceivable as noted in \cite{BKS}. We would also like to relax \eqref{support} but this appears to be difficult at least in our undirected model. In theory, the limit shape of any non-deterministic probability measure should not be a polygon, yet this has not been shown for any absolutely continuous measure. For higher dimensions, our proof only yields a constant bound, so new methods need to be developed.

\subsection{Notation}

(some specifications are omitted if they are clear within the context)

$|x|:\,\ell_1$ norm of $x$

$\Lambda(n)$: A square of side length $2n$ centered at the origin of $\Z^2$, rounded to nearest integer

$\partial\Lambda(n)$: The discrete boundary of $\Lambda(n)$ (any reasonable definition works as long as $\Lambda(n)$ is the disjoint union of $\partial\Lambda(k)$ for $0\le k\le n$). 

$T(p)(\omega)$: Passage time of a path $p$ in the edge weight environment $\omega$

$T(a,b)(\omega),[T_K(a,b)(\omega)]$: Passage time from a vertex $a$ to a vertex $b$ in the environment $\omega$ [when only paths completely contained in $K$ are considered]. 

$\gamma(a,b)(\omega)$: The geodesic from $a$ to $b$ for the environment $\omega$. Since we work with absolutely continuous weight distributions, we may assume uniqueness of geodesics.

We also abbreviate by setting $T(-n,n)=T((-n,0),(n,0))$, $T(0,n)=T((0,0),(n,0))$ and similarly for $T(-n,0),\gamma(-n,n)$ etc. 

\section{Some auxiliary results}

The following lemmas play a key role in the proof of the theorem. The first one is a Mermin-Wagner type argument as found in \cite{BKS}. See also subsequent applications of this lemma in \cite{defluc} and \cite{smallball}. Similar to the original context in physics, this argument shows existence of fluctuations at low cost, where low cost here means occurring at uniformly positive probability. 

\begin{lemma}[\cite{BKS}, Lemma 2.12]
\label{Lem:mermin-wagner}
Let $\nu$ be an absolutely-continuous probability measure on $\R$. There exist
\begin{itemize}
\item a Borel set $S \subset \R$ with $\nu(S)=1$,
\item Borel subsets $(B_\delta)_{\delta>0}$ of $S$ with $\lim_{\delta \to 0} \nu(B_\delta)=1$,
\item for each $\sigma\in [0,1]$, increasing bijections $g_\sigma: S \to S$
\end{itemize}
such that the following holds:
\begin{enumerate}
\item For $\sigma\in [0,1]$ and $\delta>0$,
\[
g_\sigma(w)\ge w+\delta \sigma \text{ for } w \in B_\delta
\]
and $g_0(w)=w$.
\item Given an integer $n\ge 1$ and a vector $\tau=(\tau_1,\dots,\tau_n)\in [0,1]^n$, define the bijection $T_\tau: S^n \to S^n$ component-wise by
\[
T_\nu(w)_i=g_{\tau_i}(w_i).
\]
Then for each Borel set $A \in \R^n$,
\[
\nu^n(T_\tau(A))\ge e^{-\| \tau \|^2} \nu^n(A)^2
\]
where $T(A):=\{T(a):  a \in A \cap S^n\}$. 
\end{enumerate}
\end{lemma}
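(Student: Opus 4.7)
The plan is to realize $\nu$ as the pushforward of the standard Gaussian under a monotone transport and carry out the estimate in the Gaussian picture. Let $F$ denote the CDF of $\nu$ and $\Phi$ the standard normal CDF; by absolute continuity we may, after discarding a $\nu$-null set, choose $S \subseteq \R$ so that $\Psi := \Phi^{-1}\circ F$ is an absolutely continuous, strictly increasing bijection $S \to \R$ with absolutely continuous inverse and $\Psi_* \nu$ equal to the standard Gaussian measure. The natural candidate for the shift is
\[
g_\sigma(w) := \Psi^{-1}\bigl(\Psi(w)+\sigma\bigr),
\]
an increasing bijection of $S$ with $g_0 = \mathrm{id}$; under the $n$-fold transport, $T_\tau$ on $S^n$ is conjugate to the translation $y \mapsto y+\tau$ on $\R^n$ equipped with the product Gaussian density $\phi_n$.

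For property~(1) I let $G_\delta := \{y \in \R : (\Psi^{-1})'(y) \ge \delta\}$ and set
\[
B_\delta := \Psi^{-1}\bigl(\{y \in \R : [y,y+1]\subseteq G_\delta\}\bigr).
\]
The fundamental theorem of calculus then gives, for $w \in B_\delta$ and $\sigma \in [0,1]$,
\[
g_\sigma(w) - w \;=\; \int_0^\sigma (\Psi^{-1})'(\Psi(w)+t)\,dt \;\ge\; \delta\,\sigma,
\]
which is the required displacement bound. The full-measure statement $\nu(B_\delta) \to 1$ as $\delta \to 0$ reduces, after pushing forward to the Gaussian, to showing that $\{y : [y,y+1] \cap G_\delta^c \neq \emptyset\}$ has vanishing Gaussian measure as $\delta \to 0$, which follows from $(\Psi^{-1})' > 0$ Lebesgue-almost everywhere (a consequence of the absolute continuity of $\Psi^{-1}$) via a short Fubini argument over the $[0,1]$-family of translates.

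For property~(2), set $B := \Psi(A) \subseteq \R^n$, so that $\nu^n(A) = \int_B \phi_n$ and $\nu^n(T_\tau A) = \int_{B+\tau} \phi_n$. Using the explicit density ratio $\phi_n(y)/\phi_n(y+\tau) = \exp\bigl(\langle y,\tau\rangle + \tfrac12\|\tau\|^2\bigr)$ and Cauchy--Schwarz applied to the factorisation $\phi_n(y) = \phi_n(y+\tau)^{1/2} \cdot \bigl[\phi_n(y)^2/\phi_n(y+\tau)\bigr]^{1/2}$, I obtain
\[
\Bigl(\int_B \phi_n\Bigr)^2 \;\le\; \int_{B+\tau}\phi_n \;\cdot\; \int_B \frac{\phi_n(y)^2}{\phi_n(y+\tau)}\,dy \;=\; e^{\|\tau\|^2}\int_{B+\tau}\phi_n \cdot \int_{B-\tau}\phi_n,
\]
where the last equality is the substitution $z = y-\tau$; bounding $\int_{B-\tau}\phi_n \le 1$ then yields $\nu^n(T_\tau A) \ge e^{-\|\tau\|^2}\nu^n(A)^2$.

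The step I expect to be the main obstacle is the measure estimate $\nu(B_\delta) \to 1$: it is the only place where the absolute continuity of $\nu$ enters in an essential way, and it must be handled without any tail or support hypothesis on $\nu$ (in particular, $G_\delta^c$ need not have small Lebesgue measure near infinity). Once the sets $B_\delta$ are in place, the remaining Gaussian Cauchy--Schwarz step is essentially algebraic.
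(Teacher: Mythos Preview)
The paper does not prove this lemma --- it is quoted from \cite{BKS} without argument --- so there is no in-paper proof to compare against. Your overall scheme (monotone transport to a standard Gaussian, then Cauchy--Schwarz on translated Gaussians) is the standard one, and your derivation of property~(2) is correct.

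The gap is in property~(1). Your set $B_\delta=\Psi^{-1}(\{y:[y,y+1]\subseteq G_\delta\})$ can be empty for every $\delta>0$. Choose $\nu$ so that $(\Psi^{-1})'(y)=y-\lfloor y\rfloor$; one checks that the resulting density $f(x)=\phi(\Psi(x))/(\Psi(x)-\lfloor\Psi(x)\rfloor)$ has only integrable $|x-x_n|^{-1/2}$-type singularities at the points $x_n=\Psi^{-1}(n)$, so this $\nu$ is genuinely absolutely continuous. Then $G_\delta^c=\bigcup_{n\in\Z}[n,n+\delta)$ meets every unit interval, hence $B_\delta=\emptyset$ for all $\delta>0$. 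No Fubini argument can rescue this, because Fubini controls the \emph{Lebesgue measure} of $[y,y+1]\cap G_\delta^c$, not the event that this intersection is nonempty; in the example that measure equals $\delta$ for every $y$, yet the event holds everywhere. The repair is to define $B_\delta$ directly by the inequality you need, namely
\[
B_\delta=\Bigl\{w\in S:\ \inf_{\sigma\in(0,1]}\frac{g_\sigma(w)-w}{\sigma}\ge\delta\Bigr\}.
\]
For Lebesgue-a.e.\ $y=\Psi(w)$ the difference quotient $\sigma\mapsto\sigma^{-1}(\Psi^{-1}(y+\sigma)-\Psi^{-1}(y))$ is continuous and strictly positive on $(0,1]$ (strict monotonicity of $\Psi^{-1}$) and tends to $(\Psi^{-1})'(y)>0$ as $\sigma\downarrow0$, so its infimum is positive; this gives $\nu(\bigcup_{\delta>0}B_\delta)=1$ and hence $\nu(B_\delta)\to1$ by monotonicity.
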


The second lemma relates to the BKS midpoint problem, addressing the probability that a geodesic goes through the origin. 

\begin{lemma}[midpoint problem]
\label{Lem:midpoint}
For any absolutely continuous probability measure with finite exponential moments and a limit shape that is not a polygon with $40$ sides or less,
\[
\lim_{n \to \infty} \Prob{ \Lambda(n^{1/17}) \cap \gamma(-n,n)=\emptyset }=1. 
\]
\end{lemma}
\begin{proof}
See Theorem~1.2 in \cite{BKS}. The probability of the origin (or any nearby vertex) lying on $\gamma(-n,n)$ decreases with order $n^{-1/16}$, so we can take a union bound over all vertices on the boundary of $\Lambda(n^{1/17})$. 
\end{proof}

It is well-known that in subcritical Bernoulli percolation closed edges eventually cannot be avoided. The lemma below expands this idea, saying that jumping over a few (= sublinearly many) closed edges will not suffice. \newpage

\begin{lemma}[supercriticality implies positive ratio]
\label{Lem:positiveratio}
Let $B \subset \R$ be a measurable set with $\nu(B)\ge 1-p_c(d)$. Then there exist constants $a,C,c>0$ such that for all $n$ we have
\[
\Prob{\forall \gamma \in \Gamma(0,\partial\Lambda(n)): \, \sum_{e\in \gamma} 1(\tau(e)\in B)\ge an }\ge 1-C \exp(-cn),
\]
i.e.\ with high probability, at least $an$ many edges of a geodesic from the origin to $\partial\Lambda(n)$ assume a weight in $B$. 
\end{lemma}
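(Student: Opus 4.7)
The plan is to view the edges as a subcritical Bernoulli percolation by declaring $e$ \emph{open} iff $\tau(e)\notin B$; then the open parameter $p:=1-\nu(B)\le p_c(d)$. Under strict subcriticality, the Aizenman--Barsky--Menshikov theorem yields $\Prob{\mathrm{rad}(C(v))\ge k}\le C_1e^{-c_1k}$ for every vertex $v$. The task is to lift this single-vertex bound to a statement uniform over all paths from $0$ to $\partial\Lambda(n)$; since the naive union bound over self-avoiding paths is defeated near $p_c$ (the connective constant of $\Z^2$ exceeds the Chernoff rate), a renormalization is required.

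Fix a large scale $L$, partition $\Z^d$ into $L$-boxes $B_z=zL+[0,L)^d$ and enlarged boxes $B_z^+=zL+[-L,2L)^d$, and call $B_z$ \emph{safe} if every open cluster intersecting $B_z$ is contained in $B_z^+$. A union bound over the $L^d$ vertices of $B_z$ together with exponential decay gives $\Prob{B_z\text{ unsafe}}\le L^dC_1e^{-c_1L}$, which tends to $0$ as $L\to\infty$; moreover $(\{B_z\text{ safe}\})_z$ is $O(1)$-dependent on the block lattice, so by Liggett--Schonmann--Stacey it stochastically dominates an i.i.d.\ Bernoulli field with density tending to $1$. The deterministic engine is: if $B_z$ is safe and $\gamma$ has a subpath starting in $B_z$ and exiting $B_z^+$, then that subpath uses at least one edge $e$ with $\tau(e)\in B$ inside $B_z^+$---for otherwise the all-open subpath would place a vertex in $B_z$ and a vertex outside $B_z^+$ in the same open cluster.

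Take a simple path $\gamma$ from $0$ to $\partial\Lambda(n)$, truncated at its first exit of $\Lambda(n/2)$; the induced block-lattice walk has length $M\ge n/(2L)$, and every visited block satisfies $B_z^+\subset\Lambda(n)$, so each visit yields a well-defined \emph{transit} of $\gamma$ from entering $B_z$ to next exiting $B_z^+$. For each safe visit, pick the first edge $e$ with $\tau(e)\in B$ appearing in this transit. A given such edge is picked at most $3^d$ times, since it lies in only $3^d$ enlarged boxes $B_z^+$ and, for each such box, at most one transit of $\gamma$ uses it (by simplicity of $\gamma$). Hence the number of distinct edges on $\gamma$ with weight in $B$ is at least $3^{-d}$ times the number of safe visits on the block walk.

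It therefore suffices to show that, with probability $\ge 1-Ce^{-cn}$, every block-lattice walk of length $M\ge n/(2L)$ starting at the origin block has at most $M/2$ unsafe visits. A union bound over the $\le C_0^M$ walks (where $C_0$ is the block coordination), combined with a Chernoff estimate for the i.i.d.-dominated count of unsafe visits (rate $c(L)\to\infty$ as $L\to\infty$), gives $\sum_{M\ge n/(2L)}(C_0e^{-c(L)})^M$; choosing $L$ large enough that $C_0e^{-c(L)}\le 1/e$ makes this $\le Ce^{-n/(2L)}$, and one reads off $a,c=\Theta(1/L)$. The main difficulty is the calibration of $L$: large enough to absorb the $O(1)$-dependent safety events into an i.i.d.\ Chernoff bound, yet not so large that the constants $\Theta(1/L)$ deteriorate beyond the target. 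A more substantive subtlety is that the hypothesis permits $p=p_c(d)$, at which exponential decay fails in $\Z^2$, so either the assumption should be strengthened to strict subcriticality $\nu(B)>1-p_c(d)$ or a separate argument is needed at criticality.
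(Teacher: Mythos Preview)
Your approach is valid but takes a genuinely different route from the paper. The paper's argument is a short reduction: assign weight $1$ to edges with $\tau(e)\in B$ and weight $0$ otherwise, obtaining a Bernoulli FPP model whose time constant $\mu$ is strictly positive because the zero-weight edges form a subcritical percolation ($\nu(B^c)<p_c$); Kesten's large deviation bound \cite[Theorem~5.2]{kesten} then gives $T(0,y)\ge an$ for all $y\in\partial\Lambda(n)$ with probability $\ge 1-Ce^{-cn}$, and in this Bernoulli model $T(0,y)$ is precisely the minimal number of $B$-edges along any path from $0$ to $y$. All the renormalization is hidden inside the cited black box. Your direct block construction via exponential decay, Liggett--Schonmann--Stacey domination, and the safe-box transit mechanism is a self-contained alternative that essentially reproves the needed large-deviation input from scratch.

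One step needs tightening. The Chernoff estimate is applied to $\sum_i \1(B_{z_i}\text{ unsafe})$ along a block walk that may revisit blocks, and for such walks the indicators are not independent even after LSS domination: a walk bouncing many times between two blocks has all visits unsafe at cost only $O(q)$, not $e^{-c(L)M}$, so the union bound over all $C_0^M$ walks is not beaten. The fix is routine---pass to distinct visited blocks (one per block-annulus, or loop-erase the walk) and union-bound over self-avoiding block paths or $*$-connected block sets of size $\Theta(n/L)$; your transit argument and $3^d$-multiplicity bound then go through unchanged. Your remark about the borderline case $\nu(B)=1-p_c$ is well taken: the paper's proof also silently uses the strict inequality $\nu(B^c)<p_c$ to get a positive time constant, and only this strict case is needed in the application ($\nu(B_\delta)>1/2$).
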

\begin{proof}
Consider an FPP model with Bernoulli edge weights $\Prob{\tau(e)=1}=\Prob{B}$ and $\tau(e)=0$ otherwise. Since $\nu(B^c)<p_c(d)$, there exists $a>0$ such that $\mu(x)\ge 2a$ for all $x$ with $|x|_\infty=1$ (this is a consequence of the limit shape theorem). Using a large deviation result \cite[Theorem~5.2]{kesten} and a union bound, the passage time from the origin to any point in $\Lambda(n)$ is at least $a n$ with exponentially high probability. This means that any geodesic, and hence any path from the origin to $\Lambda(n)$, must contain at least $an$ many edges with weight $1$. Translating this result to the original model finishes the proof. 
\end{proof}

\section{proof of main result}

We will stick to the case $x=ne_1$. This simplifies notation, the proof is similar for arbitrary directions. All constants used can be chosen uniformly for all directions, essentially due to compactness of the limit shape. See Figure~\ref{Fig:A} for a visual summary of the argument.

Define the following events, depending on $n$. Here we choose $\delta>0$ small enough such that $\nu(B_\delta)>1/2$ where $B_\delta$ is given by Lemma~\ref{Lem:mermin-wagner}. 

$A_1$: The geodesic from $(-n,0)$ to $(n,0)$ does not intersect $\Lambda(n^{1/17})$.

$A_2$: Any geodesic between points in $\Lambda(n)$ is contained in $K=\Lambda(Cn)$ for some $C>1$ specific to $\nu$

$A_3(\delta)$: Any path between points $x,y\in\Lambda(n)$ with $|x-y|\ge n^{1/34}$ contains at least $a \cdot |x-y|$ many edges in $B_\delta$ (the constant $a$ is provided by Lemma~\ref{Lem:positiveratio})

All of these events have high probability for large $n$: This follows from Lemma~\ref{Lem:midpoint} for $A_1$, a standard large deviation result for $A_2$ and Lemma~\ref{Lem:positiveratio} for $A_3$. A $K$-geodesic is the passage time-minimizing path among all paths contained in $K$. We define the events $A_1^K,A_3^K\subset \R^K$ similarly to $A_1$ and $A_3$ but with the geodesic replaced by the $K$-geodesic and only paths contained in $K$ are considered. Clearly, those events also have a high probability and the events coincide with $A_1$ and $A_3$, respectively, if conditioned on $A_2$. 

Define the event  $A=A_1^K\cap A_3^K \cap S^K$. Then $\nu^K(A)\ge 3/4$ (or any value in $(0,1)$) for sufficiently large $n$. We now turn to the construction of the weight modifiers $\tau_e\in [0,1]$ for $e\in K$. 

\[
\tau_e=
\begin{cases} k^{-1} \log(n)^{-1/2} &\text{ if } e \in \partial\Lambda(k) \text{ for some } k \in \{2,...,n^{1/17}\} \\
0 &\text{ otherwise }.
\end{cases}
\]
Note that for some absolute constant $C_1$
\begin{equation}
\|\tau\|^2=\sum_e \tau_e^2=\sum_{k=2}^{n^{1/17}}  \frac{8k}{k^2 \log(n)}\le C_1. 
\end{equation}
We are now ready to apply Lemma~\ref{Lem:mermin-wagner} with our choice of $\tau$ and $A$. The lemma then gives us a bijection $T: S^K \to S^K$ such that
\[
\nu^K(T(A))\ge e^{-\|\tau\|^2}\nu^k(A)^2 \ge e^{-C_1}(3/4)^2=:c_2>0.
\]
Here $c_2$ does not depend on $n$. Define the event
\[
M_n:\{ \omega: \omega\in A_2\cap A_1, \omega_{|K}\in T(A) \}
\]
Then for large $n$, $\Prob{M_n}\ge c_2/2$. For each edge weight configuration $\omega \in M_n$, define 
\[
\tilde\omega=
\begin{cases}
    \omega \text{ on } K^c \\
    T^{-1}(\omega) \text{ on } K. 
\end{cases}
\]
Since $g_\sigma$ is increasing, we have $\tilde\omega \le \omega$ on $\Lambda(n^{1/17})$ and equal weights outside. We claim the following:

\textbf{Claim 1:} $T_K(0,n)(\omega)\ge T_K(0,n)(\tilde \omega)+c\log(n)^{1/2}$
\begin{proof}
By Lemma~\ref{Lem:mermin-wagner}, for any path $p$ from $0$ to $\partial\Lambda(n^{1/17})$, and $e\in p$ we have $\omega(e)\ge \tilde{\omega}(e) $ and if $\tilde\omega(e)\in B_\delta$, we even have $\omega(e)\ge \tilde{\omega}(e) + \delta \tau_e$. The path $p$ contains disjoint sub-paths $p_1,...,p_{n^{1/34}}$ such that $p_i$ connects $\partial\Lambda((i-1)n^{1/34})$ with $\partial\Lambda(i n^{1/34})$. Since $\tilde\omega \in A \subset A_3^K$, every path $p_i$ contains at least $an^{1/34}$ many edges in $B_\delta$. Hence,
\begin{align*}
&\sum_{e\in p} \tau_e 1(\tilde\omega(e)\in B_\delta)
\ge a n^{1/34} \sum_{i=1}^{n^{1/34}-1} \frac{1}{i n^{1/34}\log(i n^{1/34})^{1/2}} \\
&\ge a n^{1/34} \int_1^{n^{1/34}} \frac{1}{x n^{1/34}\log(x n^{1/34})^{1/2}}\,dx 
 =  a \int_{n^{1/34}}^{n^{1/17}} \frac{1}{y \log(y)^{1/2}} \, dy \\
&=(a/(1/2)) \left[\log(n^{1/17})^{1/2}-\log(n^{1/34})^{1/2}\right]\ge c_3 \log(n)^{1/2}
\end{align*}
for some $c_3>0$. For $p$ we choose the subpath of $\gamma(0,n)(\omega)$ until the first exit from $\Lambda(n^{1/17})$ and let $p^c$ denote the remaining path to $(n,0)$. We then obtain
\begin{align*}
&T(0,n)(\omega)=T(p)(\omega)+T(p^c)(\omega)=T(\gamma(0,n)(\omega))(\tilde \omega)+T(p)(\omega)-T(p)(\tilde\omega)\\
&\ge T(\gamma(0,n)(\omega))(\tilde\omega)+\delta \sum_{e\in p} \tau_e 1(\tilde\omega(e)\in B_\delta) \ge T(0,n)(\tilde \omega)+ c_3 \log(n)^{1/2}.
\end{align*}
\end{proof}
\textbf{Claim 2:} $T_K(-n,n)(\omega)=T_K(-n,n)(\tilde \omega)$
\begin{proof}
Since $\omega$ and $\tilde\omega$ are both in $A_1^K$, both $K$-geodesics $\gamma_K(-n,n)(\omega)$ and $\gamma_K(-n,n)(\tilde\omega)$ lie in a region where the edge weights of $\omega$ and $\tilde\omega$ coincide. The respective passage times must then be equal by definition of a geodesic.  
\end{proof}

We now combine the claims. In the first line we use $\omega \in A_2$ and Claim 2. In the following inequality we use $\omega\ge \tilde\omega$ and Claim 1. Finally, we use subadditivity of the passage time $T_K$.

\begin{align*}
&T(-n,0)(\omega)+T(0,n)(\omega)-T(-n,n)(\omega)
=T_K(-n,0)(\omega)+T_K(0,n)(\omega)-T_K(-n,n)(\tilde \omega) \\
&\ge T_K(-n,0)(\tilde\omega)+T_K(0,n)(\tilde \omega)+c_3 \log(n)^{1/2}-T_K(-n,n)(\tilde \omega) \ge c_3 \log(n)^{1/2}.
\end{align*}

\medskip
Since the event $M_n$ occurs with a positive probability uniformly bounded away from zero, 
\begin{equation*}
\E{}{\left[ T(-n,0)+T(n,0)-T(-n,n)\right]}\ge  \frac{c_2c_3}{2} \log(n)^{1/2}.
\end{equation*}
Applying \eqref{eq:goal} finishes the proof.

\begin{figure}
\centering
\includegraphics[scale=0.165]{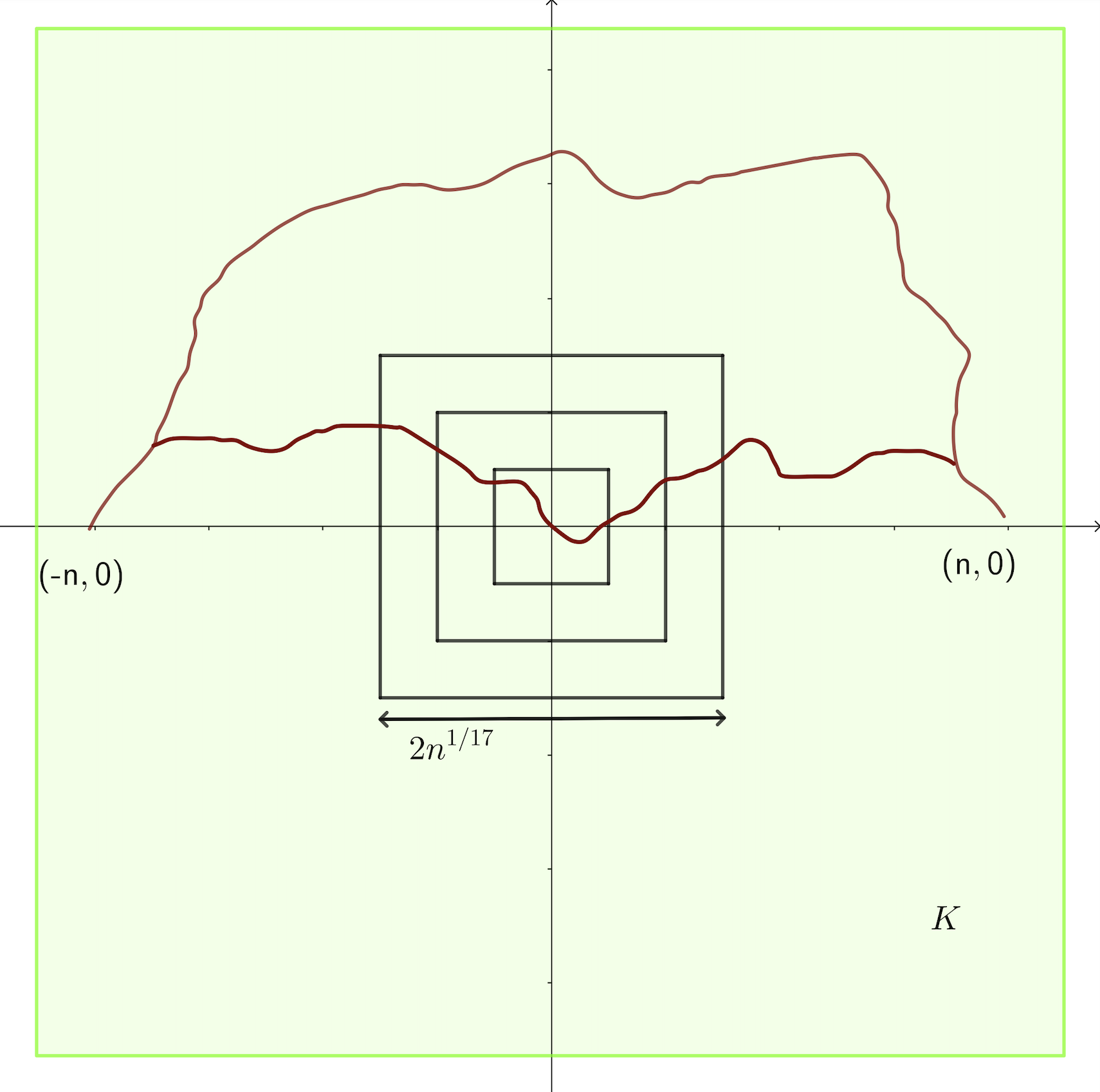}
\caption{
The geodesics from $(-n,0)$ to $(n,0)$ are the same for both environments $\omega$ and $\tilde\omega$ because they do not intersect $\Lambda(n^{1/17})$. But for geodesics going through the origin, the passage time for $\omega$ is larger. Since $\tilde\omega \in A_3^K$, the geodesic $\gamma(n,0)(\omega)$ must go through edges $e$ in each of the rectangular annuli of width $n^{1/34}$ where the edge weight is at least $\tau_e$ larger compared to $\tilde \omega$.}
\label{Fig:A} 
\end{figure}

\medskip

\end{document}